\newtheorem{theorem}{Theorem}
\newtheorem{prop}{Proposition}
\newtheorem{lemma}{Lemma}
\newcommand{\vcentergraphics}[1]{\ensuremath{\vcenter{\hbox{\includegraphics{#1}}}}}
\newcommand{\st}[2]{\ensuremath{\vcenter{\hbox{\scalebox{1}{\includegraphics{binary_tree_#1-#2}}}}}}
\newcommand{\arxiv}[1]{\href{http://arxiv.org/abs/#1}{\texttt{arXiv:#1}}}
\begin{document}

\begin{center}
\vskip 1cm{\LARGE\bf Non-Contiguous Pattern Avoidance in Binary Trees
}
\vskip 1cm
\large
Michael Dairyko\footnote{Partially supported by NSF grant DMS-0851721}\\
Department of Mathematics\\
Pomona College\\
Claremont, CA 91711, USA \\
\ \\
Lara Pudwell\footnotemark[1]  \\
Department of Mathematics and Computer Science\\
Valparaiso University\\
Valparaiso, IN 46383, USA\\
{\tt Lara.Pudwell@valpo.edu}\\
\ \\
Samantha Tyner\footnotemark[1]\\ 
Department of Mathematics\\
Augustana College\\
Rock Island, IL 61201, USA\\
\ \\
Casey Wynn\footnotemark[1]\\
Department of Mathematics and Computer Science \\
Hendrix College\\
Conway, AR 72032, USA
\end{center}

\begin{abstract}
In this paper we consider the enumeration of binary trees avoiding non-con\-tig\-uous binary tree patterns. We begin by computing closed formulas for the number of trees avoiding a single binary tree pattern with 4 or fewer leaves and compare these results to analogous work for contiguous tree patterns. Next, we give an explicit generating function that counts binary trees avoiding a single non-contiguous tree pattern according to number of leaves and show that there is exactly one Wilf class of $k$-leaf tree patterns for any positive integer $k$. In addition, we enumerate binary trees that simultaneously avoid more than one tree pattern. Finally, we explore connections between pattern-avoiding trees and pattern-avoiding permutations.
\end{abstract}

\section{Introduction}\label{S:Intro}

The notion of one object avoiding another has been studied in permutations, words, partitions, and graphs.  Much recent work has been devoted to the study of pattern-avoiding permutations.  Given permutations $\pi = \pi_1 \cdots \pi_n$ and $\rho=\rho_1 \cdots \rho_k$, we say that $\pi$ contains $\rho$ as a (classical) pattern if there exist indices $1 \leq i_1 < i_2 < \cdots < i_k \leq n$ such that $\pi_{i_1} \cdots \pi_{i_k}$ is order-isomorphic to $\rho$; that is, $\pi_{i_a} \leq \pi_{i_b}$ if and only if $\rho_a \leq \rho_b$.  Otherwise, $\pi$ is said to avoid $\rho$.  For example $\pi=24135$ contains the pattern $\rho=132$ as evidenced by $\pi_1=2$, $\pi_2=4$, and $\pi_4=3$, but $\pi$ avoids the pattern $321$ because $\pi$ has no decreasing subsequence of length 3.  One variation on this definition of pattern avoidance is to place the restriction $i_{j+1}=i_j + 1$ on the indices for $1 \leq j \leq k-1$.  If there exists such a subsequence of $\pi$ that is order-isomorphic to $\rho$, then $\pi$ is said to contain $\rho$ as a \emph{consecutive} permutation pattern.  For each of these definitions, two natural questions arise: ``Given a permutation $\rho$, how many permutations of length $n$ avoid $\rho$?'' and ``When do two distinct permutations $\rho_1$ and $\rho_2$ yield the same avoidance generating function?''  Patterns $\rho_1$ and $\rho_2$ with this property are said to be \emph{Wilf-equivalent}.

In this paper we consider analogous questions of pattern avoidance for plane trees. All trees in the paper are rooted and ordered.  We will focus on full binary trees, that is, trees in which each vertex has 0 or 2 (ordered) children.  Two children with a common parent are \emph{sibling vertices}.  A vertex with no children is a \emph{leaf} and a vertex with 2 children is an \emph{internal vertex}.  A binary tree with $n$ leaves has $n-1$ internal vertices, and the number of such trees is $\frac{\binom{2n-2}{n-1}}{n}$ (OEIS A000108).  For simplicity of computation, we adopt the convention that there are zero rooted binary trees with zero leaves.  The first few binary trees are depicted in Figure \ref{F:trees}.

Conceptually, a plane tree $T$ avoids a tree pattern $t$ if there is no instance of $t$ anywhere inside $T$. Pattern avoidance in vertex-labeled trees has been studied in various contexts by Steyaert and Flajolet \cite{SF83}, Flajolet, Sipala, and Steyaert \cite{FSS90}, Flajolet and Sedgewick \cite{FS09}, and Dotsenko \cite{DTBA}.  Recently, Disanto \cite{DiTBA} studied pattern containment of caterpillar subgraphs in binary trees while Khoroshkin and Piontkovski \cite{KPTBA} considered generating functions for general unlabeled tree patterns in a different context.

In 2010, Rowland \cite{Rowland10} explored contiguous pattern avoidance in binary trees (that is, rooted ordered trees in which each vertex has 0 or 2 children).  He chose to work with binary trees because there is natural bijection between $n$-leaf binary trees and $n$-vertex trees.  His study had two main objectives.  First, he developed an algorithm to find the generating function for the number of $n$-leaf binary trees avoiding a given tree pattern; he adapted this to count the number of occurrences of the given pattern.  Second, he determined equivalence classes for binary tree patterns, classifying two trees $s$ and $t$ as equivalent if the same number of $n$-leaf binary trees avoid $s$ as avoid $t$ for $n \geq 1$.  He completed the classification for all binary trees with at most eight leaves, using these classes to develop replacement bijections between equivalent binary trees.

In 2012, Gabriel, Peske, Tay, and the second author \cite{VERUM10} considered Rowland's definition of tree pattern in ternary, and more generally in $m$-ary, trees.  After generalizing Rowland's algorithmic approach to compute functional equations for the avoidance generating functions of arbitrary ternary tree patterns, they explored bijections between equinumerous sets of pattern-avoiding trees.  Along the way they found sets of pattern-avoiding trees whose enumeration yielded a number of well-known sequences.

In this paper, we extend Rowland's work in a new direction.  The work of \cite{VERUM10} and \cite{Rowland10} may be seen as parallel to the definition of consecutive permutation patterns given at the beginning of this section.  In those papers, tree $T$ was said to contain tree $t$ as a (contiguous) pattern if $t$ was a contiguous, rooted, ordered, subtree of $T$.  In this paper, we modify the definition of tree pattern to mirror the idea of classical pattern avoidance in permutations.  In the case of pattern-avoiding permutations, there are more Wilf-classes for consecutive patterns of a given length than for classical patterns (as a small example, there are 7 Wilf classes of consecutive permutation patterns of length 4, compared to 3 Wilf classes for classical permutation patterns of length 4).  This parallel holds true in the case of trees.  In fact, as we show in Section \ref{S:gfs}, there is precisely one Wilf class of $k$-leaf patterns for any $k \in \mathbb{Z}^+$.

As with previous work, given any binary tree pattern $t$ we present a technique to compute the generating function that enumerates trees avoiding $t$ according to the number of leaves.  This enumeration shows that there is exactly one Wilf class of $k$-leaf patterns.  We also consider trees avoiding multiple tree patterns and explore relationships between sets of pattern-avoiding trees and pattern-avoiding permutations.  

\section{Definitions and Notation}\label{S:Defs}

In this paper, a tree $T$ \emph{contains} $t$ as a (non-contiguous) tree pattern if there is a tree $T^*$, obtained from $T$ via a finite sequence of edge contractions, such that $t$ is a contiguous, rooted, and ordered subtree of $T^*$.  Conversely, $T$ \emph{avoids} $t$ if there is no such $T^*$ that contains $t$ as a subtree.  For example, consider the three trees shown in Figure \ref{Feg1}.  $T$ avoids $t$ as a contiguous pattern, but $T$ contains $t$ non-contiguously  (contract all non-bolded edges).  On the other hand, $T$ avoids $s$ both contiguously and non-contiguously since no vertex of $T$ has a left child and a right child, both of which are internal vertices.

\begin{figure}[htb]
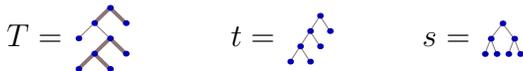

\begin{center}
$T=$  \vcentergraphics{pattern_eg} \hspace{.4in}
$t=$ \vcentergraphics{binary_tree_4-1}\hspace{.4in}
$s=$ \vcentergraphics{binary_tree_4-3}
\end{center}
\caption{Three binary trees}
\label{Feg1}
\end{figure}

We define $\text{Av}_t(n)$ to be the set of $n$-leaf binary trees that avoid the pattern $t$ non-contiguously, and $\text{av}_t(n)= \left| \text{Av}_t(n) \right|$.  We will also write $\text{Av}_t^c(n)$ for the set of $n$-leaf binary trees that avoid $t$ contiguously, and $\text{av}_t^c(n) = \left| \text{Av}_t^c(n)\right|$.  We will be particularly interested in determining the generating function $$\displaystyle{g_t(x)=\sum_{n=0}^\infty \text{av}_t(n) x^n}$$ for various tree patterns $t$.

Before we explore particular binary tree patterns, we list all of the 1, 2, 3, and 4-leaf binary trees.  We label trees with a double subscript notation.  The first subscript gives the number of leaves of the tree, and the second subscript distinguishes between distinct tree patterns with the same number of leaves.  We also note that if $t^r$ is the left-right reflection of tree $t$, then $\text{av}_t(n)=\text{av}_{t^r}(n)$ by symmetry, so we omit left-right reflections.  We will use these labels throughout the remainder of the paper.

\begin{figure}
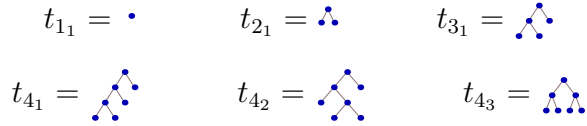

\begin{center}
$t_{1_1}=$  \vcentergraphics{binary_tree_1-1} \hspace{.4in}
$t_{2_1}=$  \vcentergraphics{binary_tree_2-1} \hspace{.4in}
$t_{3_1}=$  \vcentergraphics{binary_tree_3-1}  \\ \vspace{3mm}
$t_{4_1}=$  \vcentergraphics{binary_tree_4-1} \hspace{.4in}
$t_{4_2}=$  \vcentergraphics{binary_tree_4-2} \hspace{.4in}
$t_{4_3}=$  \vcentergraphics{binary_tree_4-3} 
\end{center}
\caption{Binary trees with less than 5 leaves}
\label{F:trees}
\end{figure}

\section{Avoiding Simple Tree Patterns}\label{S:simple}

In this section, we find generating functions for the number of trees avoiding each of the tree patterns in Figure \ref{F:trees}.  For each tree, we discuss the structure of trees that avoid the given tree pattern and explain how to find a recurrence and generating function from this structure.  We also compare these generating functions to previously known results for contiguous tree patterns. 

\subsection{Avoiding \texorpdfstring{$t_{1_1}$}{t11}, \texorpdfstring{$t_{2_1}$}{t21}, and \texorpdfstring{$t_{3_1}$}{t31}}

Clearly, a tree avoids $t_{1_1}$ if and only if it has no vertices.  This is also true for contiguous avoidance, so we have

$$\text{av}_{t_{1_1}}(n)=\text{av}_{t_{1_1}}^c(n)=0 \text{ for } n\geq 1 \hspace{.5in} \text{and} \hspace{.5in} g_{t_{1_1}}(x)=0.$$

Next, a tree avoids $t_{2_1}$ if and only if it has no vertex with two children.  In other words, only $t_{1_1}$ avoids $t_{2_1}$.  Again, we have

$$\text{av}_{t_{2_1}}(n)=\text{av}_{t_{2_1}}^c(n)=\begin{cases}1&n=1\\0&n>1\end{cases} \hspace{.5in} \text{and} \hspace{.5in} g_{t_{1_1}}(x)=x.$$

Finally, we observe that tree $t$ avoids $t_{3_1}$ if and only if $t$ has no vertex whose left child is not a leaf.  For each $n\geq 1$ there is precisely one such tree, so we have

$$\text{av}_{t_{3_1}}(n)=\text{av}_{t_{3_1}}^c(n)=1 \text{ for } n\geq 1 \hspace{.5in} \text{and} \hspace{.5in} g_{t_{3_1}}(x)=\frac{x}{1-x}.$$

From these few cases, it may seem that non-contiguous and contiguous avoidance is the same for many trees.  The reader should suspect that this similarity does not hold for larger tree patterns based on the example of Figure \ref{Feg1}.  This suspicion is confirmed when we consider 4-leaf tree patterns.

Rowland showed that for $n> 1$
$$\text{av}_{t_{4_1}}^c(n) = M_{n-1} \hspace{.5in} \text{and} \hspace{.5in} \text{av}_{t_{4_2}}^c(n)=\text{av}_{t_{4_3}}^c(n)=2^{n-2}$$ where $M_n$ is the $n$th Motzkin number (OEIS A001006).

It turns out that non-contiguous avoidance is even more well-behaved for 4-leaf tree patterns.  We will show that for $n > 1$

$$\text{av}_{t_{4_1}}(n) =\text{av}_{t_{4_2}}(n)=\text{av}_{t_{4_3}}(n)=2^{n-2}.$$

\subsection{Avoiding \texorpdfstring{$t_{4_1}$}{t41}}

To find $\text{av}_{t_{4_1}}(n)$, we consider the structure of a general $n$-leaf binary tree $T$ that avoids $t_{4_1}$.  We have two cases.  Let $v$ be the root of $T$.  In the first case, $v$'s left child has no children, while $v$'s right child is the root of an $(n-1)$-leaf subtree avoiding $t_{4_1}$.  In the second case, $v$'s left child has two children, but the leftmost of these children is a leaf.  If the right child of $v$'s left child is the root of a subtree with $i$ leaves ($1 \leq i \leq n-2$), then $v$'s right child is the root of a subtree with $n-i-1$ leaves.  Further, the $i$-leaf subtree must avoid the tree pattern $t_{3_1}$.

In the first case, we considered $\text{av}_{t_{4_1}}(n-1)$ possible trees.  In the second case, we considered $\displaystyle{\sum_{i=1}^{n-2} \text{av}_{t_{3_1}}(i) \text{av}_{t_{4_1}}(n-i-1)}$ trees. However, since we know that $\text{av}_{t_{3_1}}(i) = 1$ for $i \geq 1$, we have

$$\text{av}_{t_{4_1}}(n) = \text{av}_{t_{4_1}}(n-1) + \sum_{i=1}^{n-2} \text{av}_{t_{4_1}}(n-i-1) = \sum_{i=0}^{n-2} \text{av}_{t_{4_1}}(n-i-1)=\sum_{i=1}^{n-1}\text{av}_{t_{4_1}}(n-i).$$

Together with the base case $\text{av}_{t_{4_1}}(1)=\text{av}_{t_{4_1}}(2)=1$, we see that

$$\text{av}_{t_{4_1}}(n)=2^{n-2} \text{ for } n>1 \hspace{.5in} \text{and} \hspace{.5in} g_{t_{4_1}}(x)=\frac{x-x^2}{1-2x}.$$

\subsection{Avoiding \texorpdfstring{$t_{4_2}$}{t42}}

Next, we consider the structure of a general $n$-leaf binary tree $T$ that avoids $t_{4_2}$ where $v$ is the root of $T$.  Again, we have two cases.  As before, in the first case, $v$'s left child has no children, while $v$'s right child is the root of an $(n-1)$-leaf subtree avoiding $t_{4_2}$.  In the second case, $v$'s left child has two children, but now the rightmost of these children is a leaf. The subtree whose root is the left child of $v$'s left child must avoid the tree pattern $t_{3_1}^r$.  After a nearly identical calculation to that of avoiding $t_{4_1}$, we see that

$$\text{av}_{t_{4_2}}(n)=2^{n-2} \text{ for } n>1 \hspace{.5in} \text{and} \hspace{.5in} g_{t_{4_2}}(x)=\frac{x-x^2}{1-2x}.$$

\subsection{Avoiding \texorpdfstring{$t_{4_3}$}{t43}}

Finally, we consider the structure of a general $n$-leaf binary tree $T$ that avoids $t_{4_3}$ where $v$ is the root of $T$.  Again, we have two cases: either $v$'s left child has children or $v$'s right child has children, but not both.  There are $\text{av}_{t_{4_3}}(n-1)$ trees that fall into the first case, and $\text{av}_{t_{4_3}}(n-1)$ trees that fall into the second case, which yields

$$\text{av}_{t_{4_3}}(n) = 2\text{av}_{t_{4_3}}(n-1).$$

We have
$$ g_{t_{4_3}}(x)=\frac{x-x^2}{1-2x}.$$

\section{Generating Functions for Pattern-Avoiding Trees}\label{S:gfs}

We have just seen that all 4-leaf binary tree patterns produce the same avoidance sequence when considered as non-contiguous patterns.  This observation leads naturally to the main theorem of this paper, which provides a particularly clean answer to both questions stated in the introduction.  Namely, ``given a tree pattern $t$, how many trees with $n$ leaves avoid $t$?'' and ``given two distinct tree patterns $t$ and $s$, when do $t$ and $s$ produce the same avoidance sequence?''  In fact, we see that not only is exact enumeration possible for \emph{any} non-contiguous tree pattern, we see that all avoidance generating functions are rational and of a particularly attractive form.

\begin{theorem}
Let $k \in \mathbb{Z}^{+}$ and let $t$ be a binary tree pattern with $k$ leaves.  Then 

$$g_{t}(x) = \frac{\sum\limits_{i=0}^{\lfloor \frac{k-2}{2} \rfloor} (-1)^{i}\cdot \binom{k-(i+2)}{i}\cdot x^{i+1}}{\sum\limits_{i=0}^{\lfloor \frac{k-1}{2} \rfloor}(-1)^{i}\cdot \binom{k-(i+1)}{i}\cdot x^{i}}.$$
\label{Th1}
\end{theorem}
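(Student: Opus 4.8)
The plan is to prove the theorem in two movements: first compute $g_t$ for one convenient representative of each leaf-count, and then show the answer depends only on $k$, not on the shape of $t$. Since the theorem asserts a single formula for \emph{every} $k$-leaf pattern, these two tasks together are equivalent to the statement, and the second is exactly what yields the ``one Wilf class'' conclusion.

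For the representative I would take the right comb $R_k$, defined recursively by letting $R_2$ be the unique $2$-leaf tree and letting $R_k$ have a single leaf as its left child and $R_{k-1}$ as its right subtree (so that $g_{R_3}=g_{t_{3_1}}$ and $g_{R_4}=g_{t_{4_1}}$, recovering the cases $k=3,4$ of Section \ref{S:simple}). The first key step is a structural reformulation of non-contiguous containment: unwinding the edge-contraction definition shows that $T$ contains $R_k$ if and only if $T$ has a chain of internal vertices $w_1,\dots,w_{k-1}$ with each $w_{i+1}$ lying in the right subtree of $w_i$ (the leaf left-children of $R_k$ impose no condition, since every internal vertex automatically has a nonempty left subtree). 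Writing $\rho(T)$ for the length of the longest such right-chain, a one-line recursion gives $\rho(T)=\max\bigl(\rho(L),\,1+\rho(R)\bigr)$, where $L,R$ are the root subtrees. Hence avoiding $R_k$ means $\rho(T)\le k-2$, and if $A_m(x)$ counts trees with $\rho\le m$ by leaves, decomposing at the root yields $A_m = x + A_m\,A_{m-1}$, i.e.\ the finite continued fraction $A_m = x/(1-A_{m-1})$ with $A_{-1}=0$. Thus $g_{R_k}=A_{k-2}$.

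The second step is to recognize the closed form. Writing $A_m=P_m/Q_m$ and clearing denominators in $A_m=x/(1-A_{m-1})$ gives $P_m=xQ_{m-1}$ and $Q_m=Q_{m-1}-xQ_{m-2}$, with $Q_{-1}=Q_0=1$. This is precisely the recurrence satisfied by the Fibonacci-type polynomials $D_j(x)=\sum_i (-1)^i\binom{j-1-i}{i}x^i$ that form the denominator of the claimed formula: the Pascal identity $\binom{j-1-i}{i}=\binom{j-2-i}{i}+\binom{j-2-i}{i-1}$ shows $D_j=D_{j-1}-xD_{j-2}$, and $D_1=D_2=1$. Matching recurrences and two initial terms identifies $Q_m=D_{m+2}$, so $g_{R_k}=xD_{k-1}/D_k$; since the stated numerator is exactly $xD_{k-1}$, this is the asserted formula. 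All of this is routine once the continued fraction is in hand.

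The hard part, and the heart of the theorem, will be removing the restriction to the comb, i.e.\ proving $g_t=g_{R_k}$ for an \emph{arbitrary} $k$-leaf pattern $t$. I would argue by strong induction on $k$, decomposing $t$ at its root into a left pattern $t_L$ ($a$ leaves) and right pattern $t_R$ ($b$ leaves), $a+b=k$. The containment recursion generalizes cleanly: $T$ contains $t$ iff the root of $T$ witnesses it (its left subtree contains $t_L$ and its right subtree contains $t_R$) or one of $L,R$ already contains $t$. Using the monotonicity $t_L,t_R\subseteq t$, one sets up a finite system of generating functions tracking, for each subtree, which rooted subpatterns of $t$ it contains, with the inductive hypothesis supplying the avoidance functions of $t_L$ and $t_R$. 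The obstacle is precisely that containment of $t$ is \emph{not} determined by the marginal containment of $t_L$ and $t_R$, so the bookkeeping a priori involves a joint state over all subpatterns of $t$; the crux is to show this system collapses to $g_t=x/(1-g_{t'})$ for a suitable $(k-1)$-leaf pattern $t'$, whence $g_t=A_{k-2}$ by induction. A cleaner alternative that sidesteps the automaton bookkeeping would be an explicit leaf-preserving bijection from $\text{Av}_t(n)$ to $\text{Av}_{R_k}(n)$ in the spirit of Rowland's replacement bijections; establishing either the reduction lemma or the bijection is where essentially all the difficulty lies.
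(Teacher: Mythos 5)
Your first two steps are correct, and in fact sharper than the corresponding parts of the paper: the right-chain statistic $\rho$ with $\rho(T)=\max\bigl(\rho(L),\,1+\rho(R)\bigr)$ gives a direct structural derivation of $g_{R_k}=A_{k-2}$, $A_m=x/(1-A_{m-1})$, $A_{-1}=0$ (the paper obtains the same continued fraction for the left comb $c_k$, but only as a specialization of a general recursion), and your continuant/Pascal-identity verification of the closed form is more explicit than the paper's remark that it is ``a straightforward induction.'' The problem is your third step, which you yourself flag as containing ``essentially all the difficulty'': it is not a proof, and moreover your diagnosis of the obstacle is mistaken, so the sketch points in the wrong direction. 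No joint state over all subpatterns of $t$ is needed. The root decomposition you already wrote down closes up on its own, as follows. For a tree $T$ whose root has subtrees $L$ and $R$, $T$ avoids $t$ iff $L$ avoids $t$, $R$ avoids $t$, and it is not the case that $L$ contains $t_\ell$ and $R$ contains $t_r$ (writing $t_\ell,t_r$ for your $t_L,t_R$). Since $t$ contains $t_\ell$ and $t_r$, avoiding $t_\ell$ (or $t_r$) implies avoiding $t$; hence the avoiders with a two-child root are exactly the union of the two events ($L$ avoids $t_\ell$ and $R$ avoids $t$) and ($L$ avoids $t$ and $R$ avoids $t_r$), whose intersection is ($L$ avoids $t_\ell$ and $R$ avoids $t_r$). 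Because $L$ and $R$ vary independently, inclusion-exclusion gives
$$g_t = x + g_{t_\ell}g_t + g_t g_{t_r} - g_{t_\ell}g_{t_r}, \qquad\text{i.e.}\qquad g_t = \frac{x - g_{t_\ell}g_{t_r}}{1 - g_{t_\ell} - g_{t_r}},$$
a closed recursion involving only the avoidance generating functions of $t$, $t_\ell$, and $t_r$. This single equation is the engine of the paper's proof, and it is exactly the ``collapse'' you were hoping for.

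Even granting this equation, your sketch omits a second computation that is genuinely needed. By strong induction $g_{t_\ell}$ and $g_{t_r}$ depend only on the leaf counts $a$ and $k-a$ of the two sides, so the displayed formula shows $g_t$ depends only on the split $(a,k-a)$ --- but not yet that it is independent of the split. The paper closes this by substituting the comb identity $g_{c_j}=x/(1-g_{c_{j-1}})$ into the splits $(a,k-a)$ and $(a+1,k-a-1)$ and checking that both simplify to
$$\frac{x\left(-1+g_{c_a}(x)+g_{c_{k-a-1}}(x)\right)}{1-x-g_{c_a}(x)-g_{c_{k-a-1}}(x)+g_{c_a}(x)\cdot g_{c_{k-a-1}}(x)},$$
whence by transitivity every $k$-leaf pattern has the same generating function as the comb, i.e.\ your $A_{k-2}$. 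Neither this algebraic step nor the inclusion-exclusion recursion appears in your proposal, and the alternative you mention (a Rowland-style replacement bijection) is likewise not carried out; so as written, the Wilf-equivalence half of the theorem --- which is what makes the statement hold for \emph{every} $k$-leaf pattern rather than just the comb --- remains unproved.
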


The reader can check that this indeed matches the generating functions computed in the previous section when $k \leq 4$.  Further, notice that the generating function given in Theorem \ref{Th1} depends only on the number of leaves of $t$, and not on $t$ itself; that is, there is precisely one Wilf class of $k$-leaf tree patterns for each $k \in \mathbb{Z}^+$.  As we will see in Section \ref{S:multipatterns} and the Appendix, the analogous statement for pairs of trees is not true.  In this section we build the necessary framework to prove Theorem \ref{Th1}.

First, following \cite{VERUM10,Rowland10} we say that $T$ contains pattern $p$ at the root if $T$ contains a \emph{contiguous} copy of $p$ where the root of $p$ coincides with the root of $T$.  Now, let $g_{(t;p)}(x)$ be the generating function that enumerates binary trees avoiding tree pattern $t$ non-contiguously and containing the contiguous tree pattern $p$ at their root according to number of leaves.  Because all binary trees have a root vertex, it follows that the generating function for all trees avoiding $t$ is given by $g_{t}(x) = g_{(t;\st{1}{1})}(x)$.  Also, let $t_{\ell}$ and $t_{r}$ denote the subtrees descending from the left and right children of the root of $t$ respectively.

Since we are working with full binary trees, the root has either zero or two children. When there are zero children, we have a 1-leaf tree.  When there are two children, we have a tree with pattern \st{2}{1} at the root.  Such trees are enumerated with the generating function $g_{(t;\st{2}{1})}(x)$. Thus, we have

\begin{equation}
g_{(t;\st{1}{1})}(x)= x + g_{(t;\st{2}{1})}(x).
\label{eq:t11}
\end{equation}

Next, we determine a recurrence for $g_{(t;\st{2}{1})}(x)$.  Consider a tree $T$ that avoids $t$ and has the contiguous pattern \st{2}{1} at the root.  Either the subtree extending from the left child of the root of $T$ avoids $t_{\ell}$, the subtree extending from the right child of the root of $T$ avoids $t_r$, or both.  Inclusion-exclusion gives
 
\begin{equation}
g_{(t;\st{2}{1})}(x)= g_{(t_{\ell};\st{1}{1})}(x) \cdot g_{(t;\st{1}{1})}(x) + g_{(t;\st{1}{1})}(x) \cdot g_{(t_r;\st{1}{1})}(x) - g_{(t_{\ell};\st{1}{1})}(x) \cdot g_{(t_r;\st{1}{1})}(x).
\label{eq:t21}
\end{equation}

Combining Equations \ref{eq:t11} and \ref{eq:t21} yields

$$g_{t}(x) = x + g_{t_{\ell}}(x) \cdot g_{t}(x) + g_{t}(x) \cdot g_{t_r}(x) - g_{t_{\ell}}(x) \cdot g_{t_r}(x).$$   

Now, solve for $g_{t}(x)$ to obtain

\begin{equation}
g_{t}(x)= \frac{x - g_{t_{\ell}}(x) \cdot g_{t_r}(x)}{1-g_{t_{\ell}}(x) - g_{t_r}(x)}.
\label{eq:genfunrec}
\end{equation}

This computation yields one immediate result:

\begin{prop}
For any tree pattern $t$, $g_{t}(x)$ is a rational function of $x$. 
\end{prop}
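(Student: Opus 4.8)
The plan is to prove the statement by strong induction on the number of leaves $k$ of the pattern $t$, using the recurrence in Equation \ref{eq:genfunrec} as the inductive engine and the closure of the rational functions under the field operations as the underlying algebraic fact. The structural recurrence reduces $g_t$ to the avoidance generating functions of the two subtrees hanging off the root of $t$, each of which is a strictly smaller pattern, so the induction is set up naturally.

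For the base case $k=1$, the only pattern is $t_{1_1}$, and we computed directly in Section \ref{S:simple} that $g_{t_{1_1}}(x)=0$, which is trivially rational. For the inductive step, I would suppose $k \geq 2$ and assume that $g_s(x)$ is rational for every pattern $s$ with fewer than $k$ leaves. Since $t$ is a full binary tree with $k \geq 2$ leaves, its root is internal, so the left and right subtrees $t_{\ell}$ and $t_r$ are well-defined full binary trees whose leaf counts are positive and sum to $k$; in particular each has strictly fewer than $k$ leaves. By the inductive hypothesis, both $g_{t_{\ell}}(x)$ and $g_{t_r}(x)$ are rational.

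It then remains to observe that Equation \ref{eq:genfunrec} expresses
$$g_t(x)=\frac{x-g_{t_{\ell}}(x)\cdot g_{t_r}(x)}{1-g_{t_{\ell}}(x)-g_{t_r}(x)},$$
that is, as a combination of the rational functions $g_{t_{\ell}}(x)$ and $g_{t_r}(x)$ under addition, multiplication, and division. Because the rational functions form a field, the numerator and denominator are themselves rational, and the quotient is rational as long as the denominator is not the zero function.

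This last proviso is the only point that genuinely requires care, and it is where I would focus the argument; everything else is routine closure. By our convention that there are no binary trees with zero leaves, every avoidance generating function has zero constant term, so $g_{t_{\ell}}(0)=g_{t_r}(0)=0$ and the denominator $1-g_{t_{\ell}}(x)-g_{t_r}(x)$ has constant term $1$. It is therefore a nonzero element of the field of rational functions, the division in Equation \ref{eq:genfunrec} is legitimate, and $g_t(x)$ is rational, completing the induction.
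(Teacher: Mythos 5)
Your proof is correct and follows essentially the same route as the paper's: induction on the number of leaves of $t$, with Equation \ref{eq:genfunrec} reducing $g_t(x)$ to the rational functions $g_{t_\ell}(x)$ and $g_{t_r}(x)$. You are in fact more careful than the paper, which omits the base case details and the observation that the denominator $1-g_{t_\ell}(x)-g_{t_r}(x)$ has constant term $1$ and hence is nonzero.
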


\begin{proof}
We have already seen that $g_{t}(x)$ is rational for all tree patterns $t$ with $k \leq 4$ leaves.  Thus, by induction and Equation \ref{eq:genfunrec}, the proposition follows.
\end{proof}

Equation \ref{eq:genfunrec} simplifies further for one particular family of binary tree patterns.  Let the $k$-leaf left comb be the unique $k$-leaf binary tree where every right child is a leaf.  Write $c_k$ for the $k$-leaf left comb.  Then $c_1=t_{1_1}$, $c_2=t_{2_1}$, $c_3=t_{3_1}$, and $c_4=t_{4_1}$.  

\begin{lemma}
$g_{c_k}(x)=\frac{x}{1-g_{c_{k-1}}(x)}$ for $k \geq 2$.
\end{lemma}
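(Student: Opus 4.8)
The plan is to specialize the general recurrence from Equation~\ref{eq:genfunrec} to the pattern $t = c_k$, so that the only real work is to identify the two subtrees $t_\ell$ and $t_r$ hanging from the root of the $k$-leaf left comb and then invoke a generating function we have already computed. First I would unpack the definition of $c_k$: since every right child in $c_k$ is a leaf and $k \geq 2$, the root has exactly two children, its right child is a leaf, and its left child is the root of a $(k-1)$-leaf tree in which every right child is again a leaf. Hence $t_r = c_1 = t_{1_1}$ is a single leaf, while $t_\ell = c_{k-1}$.

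Next I would substitute these identifications into Equation~\ref{eq:genfunrec}. The key simplification is that $g_{t_{1_1}}(x) = 0$, as established in Section~\ref{S:simple} (a tree avoids $t_{1_1}$ only when it has no vertices, and by convention there are none). Setting $g_{t_r}(x) = g_{c_1}(x) = 0$ and $g_{t_\ell}(x) = g_{c_{k-1}}(x)$ makes both the product term $g_{t_\ell}(x)\cdot g_{t_r}(x)$ in the numerator and the summand $g_{t_r}(x)$ in the denominator vanish, giving
$$g_{c_k}(x) = \frac{x - g_{c_{k-1}}(x)\cdot 0}{1 - g_{c_{k-1}}(x) - 0} = \frac{x}{1 - g_{c_{k-1}}(x)},$$
which is exactly the claimed identity.

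There is essentially no analytic obstacle here; the entire content is the structural observation that the right subtree of a left comb is a single leaf and therefore contributes the trivial avoidance generating function $0$. The only point demanding care is the correct reading of the recurrence's roles for $t_\ell$ and $t_r$, so I would sanity-check the result against the cases already in hand: the formula yields $g_{c_2}(x) = x/(1-0) = x$, then $g_{c_3}(x) = x/(1-x)$, and then $g_{c_4}(x) = x/\bigl(1 - x/(1-x)\bigr) = (x-x^2)/(1-2x)$, matching $g_{t_{2_1}}$, $g_{t_{3_1}}$, and $g_{t_{4_1}}$ respectively.
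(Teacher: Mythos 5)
Your proposal is correct and follows exactly the paper's own argument: specialize Equation~\ref{eq:genfunrec} to $t = c_k$, identify $t_\ell = c_{k-1}$ and $t_r$ as the single-leaf tree $t_{1_1}$, and use $g_{t_{1_1}}(x) = 0$ to collapse the numerator and denominator. The additional sanity checks against $g_{c_2}$, $g_{c_3}$, and $g_{c_4}$ are a nice touch but not needed.
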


\begin{proof}

Let $t=c_k$ in Equation \ref{eq:genfunrec}.  Then $t_{\ell}=c_{k-1}$ and $t_r=\st{1}{1}$.  We have:

$$g_{c_k}(x)= \frac{x - g_{c_{k-1}}(x) \cdot g_{\st{1}{1}}(x)}{1-g_{c_{k-1}}(x) - g_{\st{1}{1}}(x)}.$$

Since $g_{\st{1}{1}}(x)= 0$, this simplifies to

$$g_{c_k}(x)= \frac{x}{1-g_{c_{k-1}}(x)}. \mbox{\qedhere}$$
\end{proof}

This nice relationship between $g_{c_k}(x)$ and $g_{c_{k-1}}(x)$ seems natural because of the large overlap between copies of $c_k$ and $c_{k-1}$.  Our next lemma shows that there is a simple relationship between generating functions for non-comb tree patterns as well.

\begin{lemma}
Fix $k \in \mathbb{Z}^+$.  Let $t$ and $s$ be two $k$-leaf binary tree patterns.  Then

$$g_{t}(x)=g_{s}(x).$$
\end{lemma}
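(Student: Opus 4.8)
The plan is to prove the stronger statement that there is a single rational function $h_k(x)$ with $g_t(x) = h_k(x)$ for \emph{every} $k$-leaf pattern $t$, and to take $h_k(x) := g_{c_k}(x)$, the comb generating function, which by the preceding lemma satisfies $h_1 = 0$ and the comb recurrence $h_j = \frac{x}{1-h_{j-1}}$. I would argue by strong induction on $k$. The base case $k=1$ is trivial, since $t_{1_1}$ is the only $1$-leaf tree. For the inductive step, fix a $k$-leaf pattern $t$ and let its left and right root-subtrees $t_\ell$ and $t_r$ have $a$ and $k-a$ leaves, with $1 \le a \le k-1$. As both $a$ and $k-a$ are strictly less than $k$, the induction hypothesis gives $g_{t_\ell}(x) = h_a(x)$ and $g_{t_r}(x) = h_{k-a}(x)$, so Equation \ref{eq:genfunrec} reduces to
$$g_t(x) = \frac{x - h_a(x)\,h_{k-a}(x)}{1 - h_a(x) - h_{k-a}(x)}.$$
The whole content of the lemma is thus that this right-hand side depends only on $k$, not on the split $a$.

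To establish that, I would isolate the following identity as the heart of the argument: for all $a,b \ge 1$,
$$\frac{x - h_a(x)\,h_b(x)}{1 - h_a(x) - h_b(x)} = h_{a+b}(x).$$
Granting this identity with $b = k-a$, the displayed expression for $g_t$ collapses to $h_k(x)$ independently of $a$, and the lemma follows immediately: any two $k$-leaf patterns both evaluate to $h_k(x)$, hence to each other. Note that the identity involves only the sequence $h_j$ and its recurrence, so invoking it is not circular.

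I expect this identity to be the main obstacle, and I would prove it by induction on $b$ for arbitrary $a$. The base case $b=1$ is immediate: since $h_1 = 0$, the left-hand side is $\frac{x}{1-h_a}$, which is exactly $h_{a+1}$ by the comb recurrence. For the inductive step I would substitute $h_{b+1} = \frac{x}{1-h_b}$ into the left-hand side, clear the factor $1-h_b$, and simplify; in parallel I would write $h_{a+b+1} = \frac{x}{1-h_{a+b}}$ and rewrite $1-h_{a+b}$ using the induction hypothesis $h_{a+b} = \frac{x - h_a h_b}{1 - h_a - h_b}$. Both computations land on the common rational function $\frac{x(1 - h_a - h_b)}{1 - h_a - h_b + h_a h_b - x}$, closing the induction. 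The only care required is routine bookkeeping of numerators and denominators; conceptually, the identity says that the binary operation $(u,v) \mapsto \frac{x-uv}{1-u-v}$ behaves \emph{additively} along the continued-fraction sequence $h_j$, and it is precisely this additivity that forces every $k$-leaf pattern, whatever its shape, into a single Wilf class.
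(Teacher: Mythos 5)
Your proof is correct, and it takes a recognizably different route from the paper's, although it is assembled from the same two ingredients: Equation \ref{eq:genfunrec} and the comb recurrence $g_{c_m}(x)=\frac{x}{1-g_{c_{m-1}}(x)}$. The paper fixes $k$, takes two $k$-leaf trees whose root splits are \emph{adjacent}, $(\ell,k-\ell)$ versus $(\ell+1,k-\ell-1)$, substitutes the comb recurrence once in each, checks that both collapse to a single symmetric expression, and then sweeps over $\ell$ by transitivity; the common value is identified with $g_{c_k}(x)$ only implicitly, because the comb happens to be one of the $k$-leaf trees. You instead prove the stronger claim that every $k$-leaf pattern has generating function equal to $g_{c_k}(x)$ outright, by isolating the additivity identity $\frac{x-h_ah_b}{1-h_a-h_b}=h_{a+b}$ as a standalone lemma with its own induction on $b$; your algebra checks out, with both sides of that inductive step reducing to $\frac{x\left(1-h_a-h_b\right)}{1-h_a-h_b+h_ah_b-x}$, and there is no circularity since the identity concerns only the comb sequence. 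Your version costs one extra (nested) induction but buys two things: the identification $g_t(x)=g_{c_k}(x)$, which is exactly what the subsequent proof of Theorem \ref{Th1} needs, arrives for free rather than as an afterthought; and the identity explains conceptually \emph{why} only the leaf count matters, namely that the operation $(u,v)\mapsto\frac{x-uv}{1-u-v}$ acts additively on the sequence $h_j$. Incidentally, your computation also exposes a small sign slip in the paper's displayed intermediate expression: against the denominator $1-x-g_{c_\ell}(x)-g_{c_{k-\ell-1}}(x)+g_{c_\ell}(x)\cdot g_{c_{k-\ell-1}}(x)$, the numerator should read $x\left(1-g_{c_\ell}(x)-g_{c_{k-\ell-1}}(x)\right)$ rather than $x\left(-1+g_{c_\ell}(x)+g_{c_{k-\ell-1}}(x)\right)$; since the same (consistently written) expression appears in both of the paper's parallel computations, its conclusion is unaffected.
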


\begin{proof}
Assume the lemma holds for tree patterns with $\ell$ leaves where $\ell < k$.  Since we suppose that all $\ell$-leaf trees have the same avoidance generating function, we have that any $\ell$-leaf tree has avoidance generating function $g_{c_\ell}(x)$.  Suppose that tree $t^*$ has $\ell$ leaves to the left of its root and $k-\ell$ leaves to the right of its root.  Similarly, suppose that tree $s^*$ has $\ell+1$ leaves to the left of its root and $k-\ell-1$ leaves to the right of its root.  We will show that $g_{t^*}(x)=g_{s^*}(x)$.  

From Equation \ref{eq:genfunrec}, we have

\begin{align}
\begin{split}
g_{t^*}(x)&= \frac{x-g_{c_\ell}(x)\cdot g_{c_{k-\ell}}(x)}{1-g_{c_\ell}(x)-g_{c_{k-\ell}}(x)} \\
&= \frac{x-g_{c_\ell}(x)\cdot \left(\frac{x}{1-g_{c_{k-\ell-1}}(x)}\right)}{1-g_{c_\ell}(x)-\left(\frac{x}{1-g_{c_{k-\ell-1}}(x)}\right)} \\
&= \frac{x\left(-1+g_{c_\ell}(x)+g_{c_{k-\ell-1}}(x)\right)}{1-x-g_{c_\ell}(x)-g_{c_{k-\ell-1}}(x)+g_{c_\ell}(x)\cdot g_{c_{k-\ell-1}}(x)}.  \\
\end{split}
\end{align}

Similarly,

\begin{align}
\begin{split}
g_{s^*}(x)&= \frac{x-g_{c_{\ell+1}}(x)\cdot g_{c_{k-\ell-1}}(x)}{1-g_{c_{\ell+1}}(x)-g_{c_{k-\ell-1}}(x)} \\
&= \frac{x-\left(\frac{x}{1-g_{c_{\ell}}(x)}\right)\cdot g_{c_{k-\ell-1}}(x)}{1-\left(\frac{x}{1-g_{c_{\ell}}(x)}\right)-g_{c_{k-\ell-1}}(x)} \\
&= \frac{x\left(-1+g_{c_\ell}(x)+g_{c_{k-\ell-1}}(x)\right)}{1-x-g_{c_\ell}(x)-g_{c_{k-\ell-1}}(x)+g_{c_\ell}(x)\cdot g_{c_{k-\ell-1}}(x)}. \\
\end{split}
\end{align}

Thus 

$$g_{t^*}(x)=g_{s^*}(x).$$

Since this holds for $1 \leq \ell \leq k-1$, by transitivity, all $k$-leaf tree patterns have the same non-contiguous avoidance generating function. 
\end{proof}

Finally, to prove the main theorem, it suffices to show that

$$g_{c_k}(x)=\frac{\sum\limits_{i=0}^{\lfloor \frac{k-2}{2}\rfloor} (-1)^{i}\cdot \binom{k-(i+2)}{i}\cdot x^{i+1}}{\sum\limits_{i=0}^{\lfloor \frac{k-1}{2}\rfloor}(-1)^{i}\cdot \binom{k-(i+1)}{i}\cdot x^{i}}.$$

It is a straightforward induction proof, using the fact that $g_{c_k}(x)= \frac{x}{1-g_{c_{k-1}}(x)}$ to show that this formula holds in general.

We have now explicitly enumerated trees avoiding any single binary tree pattern non-contiguously and determined all equivalences for when two trees exhibit the same avoidance sequence.  We display the explicit generating function, first 8 sequence terms, and appropriate OEIS entry for tree patterns with $k$ leaves where $k \leq 9$ in Table \ref{tableseqs}.

\begin{table}
\begin{center}
\begin{tabular} {|c|cccc|}
\hline
$k$ & $g_{c_k}(x)$ & Sequence &  Growth rate& OEIS number\\
\hline
1 &  0 & $0, 0, 0, 0, 0, 0, 0, 0 \dots$ &0& trivial\\
\hline  
2 &  $x$& $1, 0, 0, 0, 0, 0, 0, 0,  \dots$ & 0& trivial\\
\hline  
3 &  $\frac{x}{1-x} $& $1, 1, 1, 1, 1, 1, 1, 1,  \dots$ & $1^n$ & trivial\\
\hline  
4 &  $\frac{x-x^2}{1-2x} $& $1, 1, 2, 4, 8, 16, 32, 64, \dots$ & $2^n$& A000079\\
\hline  
5& $\frac{x-2x^2}{1- 3x + x^2}$  & $1, 1, 2, 5, 13, 34, 89, 233, \dots$ &$\left(\frac{3+\sqrt{5}}{2}\right)^n$& A001519 \\
\hline
6& $\frac{ x -3x^{2}+ x^{3}}{1- 4x + 3x^{2}}$  & $1, 1, 2, 5, 14, 41, 122, 365,  \dots$ & $3^n$&  A007051  \\
\hline
7& $\frac{x -4x^{2}+3x^3}{1-5x +6x^{2} - x^{3}}$& $1, 1, 2, 5, 14, 42, 131, 417, \dots$ &$\approx(3.247)^n$& A080937  \\
\hline
8 & $\frac{x -5x^{2}+6x^{3}- x^{4}} {1- 6x +10x^{2} - 4x^{3}}$& $1, 1, 2, 5, 14, 42, 132, 428, \dots$ &$(2+\sqrt{2})^n$& A024175 \\
\hline
9 & $\frac{x -6x^{2} +10x^{3} -4x^{4}}{1-7x+15x^{2} -10x^{3} +x^{4}}$& $1, 1, 2, 5, 14, 42, 132, 429, \dots$ &$\approx(3.532)^n$& A080938\\
\hline
\end{tabular}
\end{center}
\caption{Enumeration data for binary tree patterns with $k \leq 9$ leaves}
\label{tableseqs}
\end{table}

Because all generating functions $g_{t}(x)$ are rational it follows that for any $k$-leaf binary tree $t$ the sequence $\{\text{av}_{t}(n)\}_{n=1}^\infty$ satisfies a linear recurrence with constant coefficients.  In fact, when tree pattern $t$ has $k \geq 4$ leaves, $\{\text{av}_{t}(n)\}_{n=1}^\infty$ grows exponentially.  Because there are Catalan-many binary trees with $n$ leaves, the growth of these sequences is bounded above by $4^n$.  In Table \ref{tableseqs} we display also the asymptotic growth rate for $3 \leq k \leq 9$.

\section{Recurrences for Binary Trees}\label{S:bijections}

While we gave explicit combinatorial explanations for the recurrences satisfied by $\text{av}_t(n)$ when $t$ has $k=3$ or $k=4$ leaves in Section \ref{S:simple}, the rest of our work has been algebraic.   It is possible, however, to derive the recurrences satisfied by tree enumeration sequences through other techniques.  In this section, we give a combinatorial explanation for these recurrences by considering the structure of trees that avoid the $k$-leaf left comb.

First, we define a many-to-one correspondence between the set of $(n+1)$-leaf binary trees and the set of  $n$-leaf binary trees.  Given a tree $t$ with $n+1$ leaves, let $v_r$ be rightmost leaf whose sibling vertex is also a leaf.  Then, let the \emph{parent tree} of $t$ be the $n$-leaf tree obtained by deleting $v_r$ and its sibling.  If $\hat{t}$ is the parent tree of $t$, then we say that $t$ is a \emph{descendent tree} of $\hat{t}$.  While the parent tree of a tree is unique, a given tree may have multiple descendent trees.  For example, $\st{4}{1}$, $\st{4}{2}$, and $\st{4}{3}$ are descendent trees of $\st{3}{1}$.  Similarly, $\st{4}{4}$ and $\st{4}{5}$ are descendent trees of $\st{3}{2}$.

Now, for any $n$-leaf tree $t_n$, we can use this parent/descendent relationship to generate a list $t_1, t_2, t_3, \dots, t_n$ where $t_1 = \st{1}{1}$, $t_i$ has $i$ leaves for $1 \leq i \leq n$ and $t_{i+1}$ is a descendant tree of $t_i$.  We refer to such a sequence of trees as the \emph{ancestry} of $t$.  Because parent trees are unique, the ancestry of tree $t$ must be unique.  For example, the tree \st{5}{7} has ancestry

\begin{center}
$\st{1}{1} \to \st{2}{1} \to \st{3}{1} \to \st{4}{2} \to \st{5}{7}$
\end{center}

Given an $n$-leaf tree $t$, we may also generate all descendent trees of $t$ in a systematic way.  Call an internal vertex $v$ of $t$ \emph{closed} if $v$'s right child is not a leaf.  The number of descendent trees of $t$ is equal to the number of leaves of $t$ that appear to the right of all closed vertices.  In fact, to produce a descendent tree of $t$, we need only attach a pair of children to any one such leaf.  The descendent tree relationship is further articulated in the following proposition.

\begin{prop}
Suppose that tree $t$ has $i$ descendant trees.  Then those descendant trees will have $2, 3, 4, \dots , i+1$ descendant trees respectively.  Further, if $t$ has $i$ descendant trees then the leftmost vertex to which one may add children is the leftmost vertex in a copy of an $i$-leaf left comb.
\label{prop:descs}
\end{prop}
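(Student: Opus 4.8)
The plan is to first pin down exactly which leaves of $t$ are \emph{expandable} (i.e.\ which leaves, when given two children, produce a genuine descendant tree of $t$), and then read off both assertions from that description. A leaf $\ell$ yields a descendant tree of $t$ precisely when the cherry created by expanding $\ell$ is the \emph{rightmost} cherry of the new tree, since the parent-tree operation deletes exactly the rightmost cherry. This happens if and only if $\ell$ has no cherry strictly to its right, which is just a restatement of the ``to the right of all closed vertices'' condition recorded in the paragraph before the proposition. I would package this as the clean fact that, if the rightmost cherry of $t$ occupies leaf-positions $m$ and $m+1$ (reading left to right, with $t$ having $n$ leaves), then the expandable leaves are exactly those in positions $m, m+1, \dots, n$; in particular $i = n-m+1$. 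The lone degenerate case is the one-leaf tree, which has no cherry and a single expandable leaf, matching $c_1$.

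For the first assertion I would argue by a position-shift count. Expanding the leaf in position $p$ (for $m \le p \le n$) creates a brand-new cherry occupying positions $p$ and $p+1$ in the resulting $(n{+}1)$-leaf tree, and by construction this new cherry is the rightmost one. Applying the characterization above to the descendant, its expandable leaves are exactly those in positions $p, p+1, \dots, n+1$, of which there are $(n+1)-p+1 = n-p+2$. As $p$ runs over $m, m+1, \dots, n$ these counts run through $i+1, i, \dots, 2$, which is precisely the claim that the $i$ descendant trees have $2, 3, \dots, i+1$ descendant trees respectively.

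For the second assertion I would show that the $i$ expandable leaves, read left to right, span a copy of the left comb $c_i$ (in the non-contiguous sense of this paper) whose bottom-left, hence leftmost, leaf is the leaf in position $m$. The bottom cherry of this comb is the rightmost cherry, occupying positions $m$ and $m+1$. The key structural observation is that each expandable leaf in a position $p \ge m+2$ must be a \emph{right} child: were it a left child, its right-sibling subtree would be internal, hence would contain a cherry, and that cherry would sit to the right of the rightmost cherry---a contradiction. Writing $w_p$ for the parent of the leaf in position $p$, a routine check (the left subtree of $w_p$ ends exactly at position $p-1$) shows $w_p$ is the lowest common ancestor of positions $p-1$ and $p$, and that these vertices are strictly nested: $w_{m+2}$ is a strict ancestor of the parent $u$ of the rightmost cherry, $w_{m+3}$ is a strict ancestor of $w_{m+2}$, and so on up to $w_n$. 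Equivalently, the lowest common ancestors of consecutive expandable leaves have strictly decreasing depth, which is exactly the condition for the spanned pattern to be a left comb with its leftmost leaf at the bottom.

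The main obstacle is the rigidity needed in the second assertion: one must rule out any ``left turn'' among the leaves lying to the right of the rightmost cherry. The entire argument hinges on the single observation that an internal right-sibling subtree would manufacture a cherry further right than the rightmost cherry; this is what forces every such leaf to be a right child, forces the parents to be strictly nested, and thereby forces the left-comb shape. Once this is in hand, both the descendant-count statement and the left-comb statement follow mechanically, and the only point requiring care is the junction between the bottom cherry and the first right-child leaf---checking that $u$ is strictly deeper than $w_{m+2}$---so that the comb closes up correctly at the bottom.
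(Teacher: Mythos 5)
Your proposal is correct and takes essentially the same approach as the paper: the paper's proof likewise rests on the characterization of expandable leaves (its ``leaves to the right of all closed vertices'' is your ``no cherry strictly to the right''), obtains the counts $2,3,\dots,i+1$ by observing that expanding a leaf makes the expandable set a suffix of the leaf order, and obtains the comb shape from the fact that none of the relevant leaves can have an internal right sibling. Yours is simply a more detailed rendering---the paper asserts in one sentence what your nested-LCA argument verifies---though in forcing each expandable leaf in position $p\ge m+2$ to be a right child you should also note that its sibling cannot be a leaf (that would itself be a cherry to the right of the rightmost cherry), a case your phrasing skips over.
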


\begin{proof}
Both claims in the proposition are consequences of the fact that the number of descendent trees of $t$ is equal to the number of leaves of $t$ appearing to the right of all closed vertices.  

Let $v_j$ be the $j$th leaf from the right of these $i$ leaves.  When we append two children to $v_j$, $v_j$'s parent is closed, and this new tree has only $j+1$ descendents.

Further, if $t$ has $i$ leaves to the right of all closed vertices, by definition of closed, none of these leaves' parents have right children that are internal vertices.  The only way to arrange a collection of vertices so that no internal vertices are right children is in the shape of an $i$-leaf left comb.
\end{proof}

Now, consider the descendant relation restricted to trees that avoid the $k$-leaf left comb. Any tree with $k$ descendant trees contains an $k$-leaf comb, so we are only concerned with trees that have at most $k-1$ descendent trees.  As in Proposition \ref{prop:descs}, a tree with $i<k-2$ descendant trees will have descendant trees with $2, 3, 4, \dots , i+1$ descendants respectively. 

Notice further that for a tree which would have $k-1$ descendent trees, one of these descendent trees contains the $k$-leaf left comb, so such a tree only has $k-2$ descendent trees that avoid the $k$-leaf comb.   Consequently, for a tree with $k-2$ descendent trees, those descendents will have $ 2, 3, 4, \dots, k-3, k-2, k-2$ descendent trees respectively that avoid the $k$-leaf left comb.

Let $a_{n,i}$ be the number of $n$-leaf trees that avoid the $k$-leaf left comb and have exactly $i$ $(n+1)$-leaf descendants, and let $\displaystyle{a_n=\sum_{i=1}^{k-2} a_{n,i}}$.

We have the base cases $a_1 = a_{1,1}=1$ and $a_2=a_{2,2}=1$.

More generally, for $n\geq 3$

$$a_{n,2}=a_{n-1,2}+a_{n-1,3}+\cdots +a_{n-1,k-2} = a_{n-1}$$

$$a_{n,3}=a_{n-1,2}+a_{n-1,3}+\cdots +a_{n-1,k-2} = a_{n-1}$$

$$a_{n,i}=a_{n-1,i-1}+a_{n-1,i}+\cdots +a_{n-1,k-2} \text{ for $i<k-2$}$$

$$a_{n,k-2} = a_{n-1,k-3}+2a_{n-1,k-2}$$

Ultimately, we seek a recurrence for $\displaystyle{a_n=\sum_{i=1}^{k-2} a_{n,i}}$.  Adding the above equations and algebraic manipulation produces the result

$$a_n=\sum\limits_{i=2}^{\lfloor \frac{k+1}{2}\rfloor} (-1)^i\binom{k-i}{i-1}a_{n-i+1}.$$

From the discussion above, trees avoiding an $k$-leaf comb can be constructed with a finitely labeled generating tree (in particular with a generating tree using precisely $k-2$ labels), and one may use the transfer matrix method to obtain an alternate proof of Theorem \ref{Th1} for the case of avoiding the $k$-leaf left comb. 

\section{Connections to pattern-avoiding permutations}\label{S:perms}

As evidenced in the Appendix, many sequences obtained by counting trees that avoid non-contiguous binary tree patterns are already known in the literature for other reasons.  In this section we present a theorem that fully explains this connection for the case of avoiding a single binary tree pattern.

To this end, let $S(n)$ denote the set of permutations of length $n$.  As in the introduction, given $\pi \in S(n)$ and $\rho \in S(k)$ we say that $\pi$ contains $\rho$ as a pattern if there exist indices $1 \leq i_1 < \cdots < i_k \leq n$ such that $\pi_{i_a} < \pi_{i_b}$ if and only if $\rho_a < \rho_b$.  Let $S_Q(n)=\{\pi \in S(n) \vert \forall \rho \in Q, \pi \text{ avoids } \rho\}$, and $s_Q(n) = \left|S_Q(n)\right|$.  For example, $s_{\{12\}}(n) = 1$ for $n \geq 1$ since the only way to avoid the pattern 12 is to be the decreasing permutation of length $n$.  It is also well-known that if $\rho \in S(3)$, then $s_{\{\rho\}}(n)=C_n$ where $C_n$ is the $n$th Catalan number.

\begin{theorem}
Let $t$ be any binary tree pattern with $k \geq 2$ leaves.   Then $${\operatorname{av}}_{t}(n) = s_{\{231,(k-1)(k-2)\cdots 21\}}(n-1).$$
\label{permpat}
\end{theorem}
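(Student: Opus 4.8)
The plan is to reduce everything to the left comb via Theorem \ref{Th1} and then match generating functions. By Theorem \ref{Th1}, $\operatorname{av}_t(n)$ depends only on the leaf count $k$, so it suffices to prove the identity for $t = c_k$, where $\operatorname{av}_{c_k}(n) = [x^n]\, g_{c_k}(x)$. On the permutation side, set $P_k(x) = \sum_{m \ge 0} s_{\{231,\, (k-1)(k-2)\cdots 21\}}(m)\, x^m$ and write $\delta_{k-1}$ for the decreasing pattern of length $k-1$. The theorem is then equivalent to the single generating-function identity $g_{c_k}(x) = x\,P_k(x)$, since extracting the coefficient of $x^n$ on the left and of $x^{n-1}$ on the right yields exactly $\operatorname{av}_{c_k}(n) = s_{\{231,\,\delta_{k-1}\}}(n-1)$.

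First I would establish a recursive decomposition of the relevant permutations. A permutation $\pi = \pi_1 \cdots \pi_m$ avoids $231$ if and only if, writing its largest entry $m$ at position $p$, we have $\pi = \alpha\, m\, \beta$ where $\alpha$ is a $231$-avoiding permutation of $\{1, \ldots, p-1\}$, $\beta$ is a $231$-avoiding permutation of $\{p, \ldots, m-1\}$, and every entry of $\alpha$ is smaller than every entry of $\beta$; this last condition is forced, since any entry of $\beta$ lying below an entry of $\alpha$ would create a $231$ with $m$ playing the role of the $3$. The key step is then to track the longest decreasing subsequence through this decomposition. Because the entries of $\alpha$ are all smaller than those of $\beta$, no decreasing subsequence can pass from the $\alpha$-block into the $\beta$-block, and $m$ can only begin such a subsequence; hence $\operatorname{lds}(\pi) = \max\bigl(\operatorname{lds}(\alpha),\, 1 + \operatorname{lds}(\beta)\bigr)$. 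Avoiding $\delta_{k-1}$ means $\operatorname{lds}(\pi) \le k-2$, which by this formula is equivalent to $\operatorname{lds}(\alpha) \le k-2$ together with $\operatorname{lds}(\beta) \le k-3$, i.e. to $\alpha$ avoiding $\delta_{k-1}$ and $\beta$ avoiding $\delta_{k-2}$.

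Translating this decomposition into generating functions, with the factor $x$ marking the entry $m$, gives the functional equation $P_k(x) = 1 + x\,P_k(x)\,P_{k-1}(x)$, equivalently $P_k(x) = 1/(1 - x\,P_{k-1}(x))$. I would then prove $g_{c_k}(x) = x\,P_k(x)$ by induction on $k$. For the base case $k = 2$, the pattern $\delta_1$ has length $1$, so only the empty permutation avoids it and $P_2(x) = 1 = g_{c_2}(x)/x$. For the inductive step, assume $g_{c_{k-1}}(x) = x\,P_{k-1}(x)$; the comb recurrence $g_{c_k}(x) = x/(1 - g_{c_{k-1}}(x))$ established earlier becomes $g_{c_k}(x) = x/(1 - x\,P_{k-1}(x)) = x\,P_k(x)$, completing the induction and hence the proof.

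I expect the main obstacle to be the longest-decreasing-subsequence bookkeeping in the second paragraph: one must argue carefully that decreasing subsequences never cross the block boundary and that $m$ contributes exactly one extra unit, so that the constraint ``$\pi$ avoids $\delta_{k-1}$'' splits cleanly as ``$\alpha$ avoids $\delta_{k-1}$ and $\beta$ avoids $\delta_{k-2}$''. This asymmetric drop of the decreasing-pattern length by one across the two factors is precisely what produces the index shift from $k$ to $k-1$ that mirrors the comb recurrence, so getting it right is the crux. The remaining generating-function manipulation and the induction are routine once this structural fact is in hand.
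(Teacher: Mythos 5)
Your proposal is correct, but it follows a genuinely different route from the paper's. The paper argues bijectively: it labels the internal vertices of an $n$-leaf tree recursively (each subtree's root receives the largest available label) and reads the labels left to right, producing a $231$-avoiding permutation of length $n-1$; it then observes that the $k$-leaf right comb encodes the decreasing permutation $(k-1)(k-2)\cdots 21$ and asserts that a tree avoids the right comb non-contiguously exactly when its permutation avoids that decreasing pattern. Like you, the paper implicitly leans on the Wilf-equivalence of all $k$-leaf patterns (Theorem \ref{Th1}) to reduce the general statement to a single comb (it uses the right comb, you use the left; either is fine by reflection or by Theorem \ref{Th1}). You, by contrast, never construct a bijection: you derive the functional equation $P_k(x)=1/\bigl(1-x\,P_{k-1}(x)\bigr)$ from the standard max-at-position-$p$ block decomposition of $231$-avoiders, with the longest-decreasing-subsequence bookkeeping $\operatorname{lds}(\pi)=\max\bigl(\operatorname{lds}(\alpha),1+\operatorname{lds}(\beta)\bigr)$ splitting the constraint asymmetrically across the two blocks, and then match this against the paper's comb lemma $g_{c_k}(x)=x/\bigl(1-g_{c_{k-1}}(x)\bigr)$ by induction on $k$, with the correct base $P_2(x)=1=g_{c_2}(x)/x$. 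Both the decomposition and the lds analysis are sound (entries of $\alpha$ are forced below entries of $\beta$, so decreasing subsequences cannot cross the block boundary, and the maximum can only prepend to a decreasing subsequence of $\beta$). As for what each approach buys: your argument is arguably the more rigorous verification, since the paper's pivotal claim (``it is not hard to see\dots'') is precisely the step it leaves unproved, and you replace it with a short, checkable generating-function computation; the paper's bijection, however, yields more than equality of sequences---it provides the explicit correspondence that Section \ref{S:perms} goes on to exploit (the connection to postorder/inorder tree traversals and the discussion of whether the correspondence extends to non-comb patterns), structural information that a purely enumerative proof like yours does not produce.
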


\begin{proof}
It is well known that the set of binary trees with $n$ leaves is in bijection with the set of permutations of length $n-1$ which avoid the pattern 231.

To see this, label the root of tree $t$ with the label $n-1$.  Now, suppose there are $i$ internal vertices to the left of the root and $(n-i-2)$ internal vertices to the right of the root.  The $i$ vertices on the left will receive labels from the set $\{1,\dots,i\}$ and the vertices on the right will receive labels from the set $\{i+1,\dots n-2\}$.  For each subtree, give the root the largest available label and continue recursively until each internal vertex has been labeled.

Now, there is a natural left-to-right ordering of the vertices of $t$.  Read the labels of the vertices from left to right to obtain a permutation $\pi \in S(n-1)$.  Necessarily, $\pi$ avoids $231$ because all labels to the left of a given vertex have smaller labels than all labels to the right.

Further, we can see that the $k$-leaf right comb encodes the unique decreasing permutation of length $k-1$.  It is not hard to see that if a tree avoids the $k$-leaf right comb, then the corresponding permutation avoids the decreasing permutation of length $k-1$ and vice versa.
\end{proof}

We note that this correspondence between $\{231\}$-avoiding permutations and binary trees is not new.  If one ignores the leaves in our trees, the bijection given in the proof of Theorem \ref{permpat} is identical to the correspondence between postorder-labeled trees with inorder-read permutations found in \cite{FHK05}.  Further work connecting permutations to binary trees in the context of sorting can be found in \cite{BM00}, \cite{E79}, \cite{K97}, \cite{R81}, \cite{R78}, and \cite{Stanley99}.   

It is worth considering when Theorem \ref{permpat} generalizes and how.  The correspondence given in the proof of Theorem \ref{permpat} does not necessarily work with trees other than the right comb.  For example, while $\text{av}_{t_{4_1}}(n) = s_{\{231,321\}}(n-1),$ the permutation corresponding to $t_{4_1}$ is 123.  However, $\text{av}_{t_{4_1}}(n)=2^{n-2}$ ($n >1$) while $s_{\{231,123\}}(n-1)=\binom{n-1}{2}+1$ ($n >1$).  Thus even if the sequence obtained from avoiding a set of tree patterns is also the avoidance sequence for some set of pattern-avoiding permutations, there may be other bijections required to demonstrate the equivalence.  It remains open to give a natural interpretation of Theorem \ref{permpat} for trees other than the right comb.

\section{Avoiding two or more binary trees simultaneously}\label{S:multipatterns}

Thus far, we have analyzed generating functions and enumeration sequences for trees avoiding a single non-contiguous binary tree pattern. We will now consider trees that avoid two tree patterns simultaneously. To this end, let $g_{(\{t_{1}, t_{2}\};p)}(x)$ be the generating function that enumerates trees avoiding $t_1$ and $t_2$ with pattern $p$ at the root according to number of leaves.  For brevity, we may abbreviate $g_{(\{t_{1}, t_{2}\};\st{1}{1})}(x)=g_{\{t_{1}, t_{2}\}}(x)$.

Parallel to Equations \ref{eq:t11} and \ref{eq:t21} we have

\begin{equation}
	g_{(\{t_{1}, t_{2}\};\st{1}{1})}(x)= x + g_{(\{t_{1}, t_{2}\};\st{2}{1})}(x)
	\label{eq:2treebase}
\end{equation} 

and

\begin{equation}
	\begin{aligned}
 g_{(\{t_{1}, t_{2}\};\st{2}{1})}(x)&= g_{(\{t_{1_{\ell}}, t_{2_\ell}\};\st{1}{1})}(x) g_{(\{t_{1}, t_{2}\};\st{1}{1})}(x) + g_{(\{t_{1}, t_{2}\};\st{1}{1})}(x) \cdot g_{(\{t_{1_r}, t_{2_r}\};\st{1}{1})}(x) \\
&+ g_{(\{t_{1_\ell}, t_{2}\};\st{1}{1})}(x)\cdot g_{(\{t_{1}, t_{2_r}\};\st{1}{1})}(x)+ g_{(\{t_{1}, t_{2_\ell}\};\st{1}{1})}(x)\cdot g_{(\{t_{1_r}, t_{2}\};\st{1}{1})}(x) \\
&- g_{(\{t_{1}, t_{2_\ell}\};\st{1}{1})}(x) \cdot g_{(\{t_{1_r}, t_{2_r}\};\st{1}{1})}(x) - g_{(\{t_{1_\ell}, t_{2}\};\st{1}{1})}(x) \cdot g_{(\{t_{1_r}, t_{2_r}\};\st{1}{1})}(x) \\
& - g_{(\{t_{1_\ell}, t_{2_\ell}\};\st{1}{1})}(x)\cdot g_{(\{t_{1}, t_{2_r}\};\st{1}{1})}(x) - g_{(\{t_{1_\ell}, t_{2_\ell}\};\st{1}{1})}(x)\cdot g_{(\{t_{1_r}, t_{2}\};\st{1}{1})}(x) \\
& +g_{(\{t_{1_\ell}, t_{2_\ell}\};\st{1}{1})}(x) \cdot g_{(\{t_{1_r}, t_{2_r}\};\st{1}{1})}(x). \\
	\end{aligned}
\label{eq:2treerec}
\end{equation}

This latter expression follows again from inclusion-exclusion.  One can solve this pair of equations for $g_{\{t_1,t_2\}}(x)$ to obtain a rational combination of rational generating functions and see that for any pair of binary trees, $g_{\{t_1,t_2\}}(x)$ is indeed rational.

To compactly state the equivalent expression to Equation \ref{eq:2treerec} for trees avoiding $j$ binary tree patterns ($j \in \mathbb{Z}^{+}$), we need to introduce more efficient notation.  Notice that all terms on the right hand sides of Equations \ref{eq:t21} and \ref{eq:2treerec} are of the form $g_{(S_\ell;\st{1}{1})}(x) \cdot g_{(S_r;\st{1}{1})}(x)$ where for each tree $t_i$ $(1 \leq i \leq j)$, there are 3 possibilities: (i) $t_i\in S_\ell$ and $t_{i_r} \in S_r$, (ii) $t_{i_\ell} \in S_\ell$ and $t_{i} \in S_r$, or (iii) $t_{i_\ell} \in S_\ell$ and $t_{i_r} \in S_r$.  Let $v \in \{-1,0,1\}^j$ and let $gp_{\vec{v}}(x)=g_{(S_\ell;\st{1}{1})}(x) \cdot g_{(S_r;\st{1}{1})}(x)$ where (i) if $v_i=-1$ then $t_i\in S_\ell$ and $t_{i_r} \in S_r$, (ii) if $v_i=1$ then $t_{i_\ell} \in S_\ell$ and $t_{i} \in S_r$, and (iii) if $v_i=0$ then $t_{i_\ell} \in S_\ell$ and $t_{i_r} \in S_r$.  Further, for $\vec{v} \in \{-1,0,1\}^j$, define $\left|\vec{v}\right|=\sum_{i=1}^j \left|v_i\right|$.

By inclusion-exclusion we have:

\begin{equation}
g_{(\{t_{1}, \dots, t_{j}\};\st{2}{1})}(x) = \sum_{\vec{v} \in \{-1,0,1\}^j} (-1)^{j-\left|v\right|} gp_{\vec{v}}(x).
\end{equation}

Notice that this expression is linear in $g_{(\{t_{1}, \dots, t_{j}\};\st{1}{1})}(x)$.  In fact, $g_{(\{t_{1}, \dots, t_{j}\};\st{1}{1})}(x)$ only appears in two terms: the ones corresponding to $\vec{v}=\langle 1,\dots, 1\rangle$ and $\vec{v}=\langle-1,\dots,-1\rangle$, so we see that $g_{\{t_1,\dots t_k\}}(x)$ is rational for any finite set of $j$ tree patterns.

We used Equations \ref{eq:2treebase} and \ref{eq:2treerec} to compute $g_{\{t_1,t_2\}}(x)$ for any pair of binary tree patterns where $t_1$ and $t_2$ have no more than 5 leaves.  A summary of these results is given in the Appendix.  

In light of Theorem \ref{permpat} one might wonder if for every set of binary tree patterns $S$, $\{\text{av}_{S}(n)\}_{n=2}^\infty$ is identical to the avoidance sequence $\{s_{Q}(n)\}_{n=1}^\infty$ for some set $Q$ of permutation patterns.  This, however turns out not to be the case.  For example consider the sequence 

$$\left\{\text{av}_S(n)\right\}_{n=2}^\infty = 1, 2, 5, 12, 26, 49, 83, 129, \dots \text{ where } S={\{\st{5}{6},\st{5}{14}\}}.$$  If this were the avoidance sequence for some set of permutation patterns $Q$, we see from the fact that $a_3=5$ that $Q$ contains a pattern of length 3.  Further, since it is known that $s_4(\rho)=14$ for any pattern $\rho \in S(3)$, $Q$ must contain precisely two patterns of length 4.  However, exhaustive checking of $\{s_n(Q)\}_{n=1}^8$ for sets of patterns $Q$ consisting of one pattern of length 3 and two patterns of length 4 yields no match for this sequence.

\section{Conclusion}

Throughout this paper, we have investigated non-contiguous pattern avoidance in binary trees.  Unlike Rowland's work for contiguous patterns, our avoidance generating functions are always rational, and the Wilf classes obtained  for avoidance of single patterns are easy to describe: two tree patterns are Wilf-equivalent if and only if they have the same number of leaves.  The results in this paper not only give a complete characterization of trees avoiding a single pattern, but we also present a computational technique to quickly determine the number of trees avoiding any finite set of non-contiguous tree patterns.  Finally we explore combinatorial proofs of our results and give an explicit bijection between certain pattern-avoiding trees and pattern-avoiding permutations.

\section*{Acknowledgement}

The authors thank Eric Rowland for assistance with generating the tree graphics for this paper and for several helpful presentation suggestions.

\section*{Appendix}

This appendix lists pairs of binary tree patterns each of which have at most 5 leaves, classifying them by their avoidance generating function and sequence.  For each class, we give the generating function $g_{\{t_{1},t_{2}\}}(x)$, and we list the first 15 terms of the corresponding avoidance sequence.  If the avoidance sequence for a class is listed in the Online Encyclopedia of Integer Sequences \cite{OEIS}, we include the appropriate reference.  For brevity, left--right reflections are omitted.

\subsection*{Avoiding a 3-Leaf \& a 4-Leaf Tree}

\begin{center} 
\textit{Class A}
\end{center}
\begin{itemize}
\item $g_{\{t_{1},t_{2}\}}(x)= x+x^{2}+x^{3}$ 
\item Sequence: $1, 1, 1, 0, 0, 0, 0, 0, 0, 0, 0, 0, 0, 0, 0, \dots$
\end{itemize}

\begin{center}
$\left\{\st{3}{1},\st{4}{5}\right\}$
\end{center}

\hrule

\subsection*{Avoiding a 3-Leaf \& a 5-Leaf Tree}

\begin{center} 
\textit{Class A}
\end{center}
\begin{itemize}
\item $g_{\{t_{1},t_{2}\}}(x)= x+x^{2}+x^{3}+x^{4}$ 
\item Sequence: $1, 1, 1, 1, 0, 0, 0, 0, 0, 0, 0, 0, 0, 0, 0,\dots$  
\end{itemize}

\begin{center}
$\left\{\st{3}{1},\st{5}{14} \right\}$
\end{center}

\hrule

\subsection*{Avoiding a 4-Leaf \& a 5-Leaf Tree}

\begin{center} 
\textit{Class A}
\end{center}
\begin{itemize}
\item $g_{\{t_{1},t_{2}\}}(x)= x+x^{2}+2x^{3}+4x^{4} + 7x^{5}+ 8x^{6}+8x^{7}+6x^{8}+3x^{9}+x^{10}$ 
\item Sequence: $1, 1, 2, 4, 7, 8, 8, 6, 3, 1, 0, 0, 0, 0, 0, \dots$
\end{itemize}

\begin{center}
$\left\{\st{4}{1},\st{5}{14} \right\}$
\end{center}

\hrule

\begin{center} 
\textit{Class B}
\end{center}

\begin{itemize}
\item $g_{\{t_{1},t_{2}\}}(x)= \frac{x-x^{2}+x^{3}+x^{4}+x^{5}}{1-2x+x^{2}}$ 
\item Sequence: $1, 1, 2, 4, 7, 10, 13, 16, 19, 22, 25, 28, 31, 34, 37,  \dots $
\item OEIS A016777: $3k+1$ for $k\geq 4.$ 
\end{itemize}

\begin{center}
$\left\{\st{4}{1},\st{5}{9} \right\}$
\end{center}

\hrule

\begin{center} 
\textit{Class C}
\end{center}
\begin{itemize}
\item $g_{\{t_{1},t_{2}\}}(x)= \frac{-x+2x^{2}-2x^{3}}{-1+3x-3x^{2}+x^{3}}$ 
\item Sequence: $1, 1, 2, 4, 7, 11, 16, 22, 29, 37, 46, 56, 67, 79, 92,  \dots$
\item OEIS A152947: $\frac{(k-2)\cdot (k-1)+1}{2}$	
\end{itemize}

\begin{center}
$\left\{\st{4}{1},\st{5}{13} \right\}, \left\{\st{4}{1},\st{5}{8} \right\}, \left\{\st{4}{1},\st{5}{12} \right\}, \left\{\st{4}{2},\st{5}{13} \right\},$\\
$\left\{\st{4}{2},\st{5}{14} \right\}, \left\{\st{4}{2},\st{5}{8} \right\}, \left\{\st{4}{2},\st{5}{9} \right\}, \left\{\st{4}{2},\st{5}{12} \right\},$\\
$\left\{\st{4}{3},\st{5}{1} \right\}, \left\{\st{4}{3},\st{5}{2} \right\}, \left\{\st{4}{3},\st{5}{4} \right\}, \left\{\st{4}{3},\st{5}{5} \right\}$
\end{center}

\hrule

\begin{center} 
\textit{Class D}
\end{center}
\begin{itemize}
\item $g_{\{t_{1},t_{2}\}}(x)= \frac{x-x^{2}+x^{4}}{1-2x+x^{3}}$ 
\item  Sequence: $1, 1, 2, 4, 7, 12, 20, 33, 54, 88, 143, 232, 376, 609, 986, \dots$
\item OEIS A000071: Fibonacci numbers -1 for $n \geq 2.$
\end{itemize}
\begin{center}
$\left\{\st{4}{1},\st{5}{7} \right\}, \left\{\st{4}{1},\st{5}{11} \right\}, \left\{\st{4}{2},\st{5}{10} \right\}, \left\{\st{4}{2},\st{5}{6} \right\}$
\end{center}

\hrule

\begin{center}
\textit{Class E}
\end{center}
\begin{itemize}
\item $g_{\{t_{1},t_{2}\}}(x)= \frac{-x}{-1+x+x^{2}+x^{3}}$ 
\item Sequence: $1, 1, 2, 4, 7, 13, 24, 44, 81, 149, 274, 504, 927, 1705, 3136,  \dots$
\item OEIS A000073: Tribonacci Numbers 
\end{itemize}
\begin{center}
$\left\{\st{4}{1},\st{5}{5} \right\}, \left\{\st{4}{2},\st{5}{1} \right\}$
\end{center}

\hrule

\subsection*{Avoiding a Pair of 4 Leaf Trees}

\begin{center} 
\textit{Class A}
\end{center}
\begin{itemize} 
\item $g_{\{t_{1},t_{2}\}}(x)= x+x^{2}+2x^{3}+3x^{4} + 2x^{5}+ x^{6} $
\item Sequence: $1, 1, 2, 3, 2, 1, 0, 0, 0, 0, 0, 0, 0, 0, 0, \dots$
\end{itemize}

\begin{center}
$\left\{\st{4}{1},\st{4}{5} \right\}$
\end{center}

\hrule

\begin{center} 
\textit{Class B}
\end{center}
\begin{itemize}
\item $g_{\{t_{1},t_{2}\}}(x)= \frac{x-x^{2}+x^{3}}{1-2x+x^{2}}$ 
\item Sequence: $1, 1, 2, 3, 4, 5, 6, 7, 8, 9, 10, 11, 12, 13, 14,  \dots$
\item OEIS A000027: The natural numbers
\end{itemize}

\begin{center}
$\left\{\st{4}{1},\st{4}{3} \right\}, \left\{\st{4}{1},\st{4}{4} \right\}, \left\{\st{4}{2},\st{4}{3} \right\}, \left\{\st{4}{2},\st{4}{4} \right\}$
\end{center}

\hrule

\begin{center} 
\textit{Class C}
\end{center}
\begin{itemize}
\item $g_{\{t_{1},t_{2}\}}(x)= \frac{-x}{-1+x+x^{2}}$ 
\item Sequence: $1, 1, 2, 3, 5, 8, 13, 21, 34, 55, 89, 144, 233, 377, 610,  \dots$
\item OEIS A000045: Fibonacci Numbers 
\end{itemize}
\begin{center}
$\left\{\st{4}{1},\st{4}{2} \right\}$
\end{center}

\hrule

\subsection*{Avoiding a Pair of 5 Leaf Trees}
\textbf{Note:} The first five terms of the sequences in this section will be 1, 1, 2, 5, 12. Therefore, the sequences will begin with the sixth term. 

\begin{center}
\textit{Class A}
\end{center}
\begin{itemize}
\item	$g_{\{t_{1},t_{2}\}}(x)=  x+ x^{2}+2x^{3}+5x^{4}+12x^{5}+26x^{6}+46x^{7}+76x^{8}\\ 
					   \phantom{g_{\{t_{1},t_{2}\}}(x)=} +116x^{9}+163x^{10} + 208x^{11}+ 238x^{12}+ 240x^{13}+210x^{14} \\
					   \phantom{g_{\{t_{1},t_{2}\}}(x)=} +158x^{15}+ 100x^{15}+52x^{17}+21x^{18}+6x^{19}+x^{20} $
\item Sequence: $26, 46, 76, 116, 163, 208, 238, 240, 210, 158, 100, 52, 21, 6 , 1$
\end{itemize}
\begin{center}
$\left\{\st{5}{1},\st{5}{14} \right\}$
\end{center}

\hrule

\begin{center} 
\textit{Class B}
\end{center}
\begin{itemize}
\item $g_{\{t_{1},t_{2}\}}(x)= \frac{x-2x^{2}+2x^{3}+x^{4}+2x^{5}+3x^{6}+2x^{7}+2x^{8}+x^{9}}{1-3x+3x^{2}-x^{3}}$
\item Sequence: $26, 49, 83, 129, 187, 257, 339, 433, 539, 657,  \dots$
\item New to OEIS
\end{itemize}
\begin{center}
$\left\{\st{5}{6},\st{5}{14} \right\}$
\end{center}

\hrule

\begin{center} 
\textit{Class C}
\end{center}
\begin{itemize}
\item $g_{\{t_{1},t_{2}\}}(x)= \frac{x-4x^{2}+7x^{3}-5x^{4}+2x^{5}}{1-5x+10x^{2}-10x^{3}+5x^{4}-x^{5}}$ 
\item  Sequence: $26, 51, 92, 155, 247, 376, 551, 782, 1080, 1457,  \dots$
\item OEIS A027927: $T(k,2k-4), T$ given by A027926 for $n \geq 2.$
\end{itemize}
\begin{center}
$\left\{\st{5}{1},\st{5}{8} \right\}, \left\{\st{5}{2},\st{5}{8} \right\}, \left\{\st{5}{2},\st{5}{9} \right\}$
\end{center}

\hrule

\begin{center}
\textit{Class D}
\end{center}
\begin{itemize} 
\item $g_{\{t_{1},t_{2}\}}(x)= \frac{x-5x^{2}+11x^{3}-12x^{4}+7x^{5}-2x^{6}+x^{7}}{1-6x+15x^{2}-20x^{3}+15x^{4}-6x^{5}+x^{6}}$ 
\item Sequence: $26, 52, 98, 176, 303, 502, 803, 1244, 1872, 2744, \dots$
\item New to OEIS
\end{itemize}

\begin{center}
$\left\{\st{5}{1},\st{5}{12} \right\}, \left\{\st{5}{1},\st{5}{13} \right\}, \left\{\st{5}{2},\st{5}{12} \right\}, \left\{\st{5}{2},\st{5}{13} \right\}$
\end{center}

\hrule

\begin{center} 
\textit{Class E}
\end{center}
\begin{itemize}
\item $g_{\{t_{1},t_{2}\}}(x)= \frac{x-3x^{2}+3x^{3}+x^{4}-x^{5}}{1-4x+5x^{2}-x^{3}-2x^{4}+x^{5}}$ 
\item Sequence: $26, 52, 98, 177, 310, 531, 895, 1491, 2463, 4044,  \dots$
\item OEIS A116717: Number of permutations of length k which avoid the patterns 231, 1423, 3214 for $n \geq 2.$
\end{itemize}
\begin{center}
$\left\{\st{5}{4},\st{5}{9} \right\}, \left\{\st{5}{7},\st{5}{10} \right\}$
\end{center}

\hrule

\begin{center} 
\textit{Class F}
\end{center}
\begin{itemize}
\item $g_{\{t_{1},t_{2}\}}(x)= \frac{-x+2x^{2}-x^{3}-x^{4}-2x^{5}}{-1+3x-2x^{2}-x^{4}+x^{5}}$
\item Sequence: $26, 53, 104, 199, 375, 700, 1299, 2402, 4432, 8167,  \dots$
\item New to OEIS
\end{itemize}
\begin{center}
$\left\{\st{5}{5},\st{5}{6} \right\}$
\end{center}

\hrule

\begin{center}
\textit{Class G}
\end{center}
\begin{itemize}
\item $g_{\{t_{1},t_{2}\}}(x)= \frac{x-2x^{2}+2x^{4}+2x^{5}-x^{6}-x^{7}}{1-3x+x^{2}+2x^{3}+x^{4}-x^{5}-x^{6}}$ 
\item Sequence: $26, 55, 113, 227, 449, 877, 1696, 3254, 6203, 11762, \dots$
\item OEIS A116726: Number of permutations of length $k$ which avoid the patterns 213, 1234, 2431 for $n \geq 2.$
\end{itemize}
\begin{center}
$\left\{\st{5}{1},\st{5}{7} \right\},\left\{\st{5}{1},\st{5}{11} \right\}$
\end{center}

\hrule

\begin{center} 
\textit{Class H}
\end{center}
\begin{itemize}
\item $g_{\{t_{1},t_{2}\}}(x)= \frac{x-x^{2}-x^{3}+3x^{5}+2x^{6}+x^{7}}{1-2x-x^{2}+3x^{4}+2x^{5}+x^{6}}$ 
\item Sequence: $26, 56, 118, 244, 499, 1010, 2027, 4040, 8004, 15776,  \dots$
\item OEIS A073778: $a(m)=\sum_{k=0}^{m}{T(k)\cdot T(m-k)}.$ Convolution of tribonacci sequence A000073 with itself for $m \geq 3$, for $n \geq 2.$ 
\end{itemize}
\begin{center}
$\left\{\st{5}{5},\st{5}{10} \right\}$
\end{center}

\hrule

\begin{center}
\textit{Class I}
\end{center}
\begin{itemize}
\item $g_{\{t_{1},t_{2}\}}(x)= \frac{-x}{-1+x+x^{2}+2x^{3}+3x^{4}+2x^{5}+x^{6}}$ 
\item Sequence: $26, 57, 127, 284, 632, 1405, 3126, 6958, 15485, 34458,  \dots$
\item New to OEIS 
\end{itemize}

\begin{center}
$\left\{\st{5}{1},\st{5}{5} \right\}$
\end{center}

\hrule

\begin{center} 
\textit{Class J}
\end{center}
\begin{itemize}
\item $g_{\{t_{1},t_{2}\}}(x)= \frac{-x+3x^{2}-3x^{3}}{-1+4x-5x^{2}+2x^{3}}$ 
\item Sequence: $27, 58, 121, 248, 503, 1014, 2037, 4084, 8179, 16370, \dots$
\item OEIS A000325: $2^{k} - k$ 
\end{itemize}
\begin{center}
$\left\{\st{5}{4},\st{5}{6} \right\}, \left\{\st{5}{4},\st{5}{7} \right\}, \left\{\st{5}{4},\st{5}{8} \right\}, \left\{\st{5}{5},\st{5}{7} \right\},$\\
$\left\{\st{5}{5},\st{5}{12} \right\}, \left\{\st{5}{5},\st{5}{14} \right\}, \left\{\st{5}{6},\st{5}{8} \right\}, \left\{\st{5}{6},\st{5}{9} \right\},$\\
$\left\{\st{5}{7},\st{5}{12} \right\}, \left\{\st{5}{3},\st{5}{10} \right\}, \left\{\st{5}{3},\st{5}{11} \right\}, \left\{\st{5}{3},\st{5}{8} \right\},$\\
$\left\{\st{5}{3},\st{5}{9} \right\}, \left\{\st{5}{6},\st{5}{10} \right\}, \left\{\st{5}{7},\st{5}{8} \right\}$\\
\end{center}

\hrule

\begin{center} 
\textit{Class K}
\end{center}
\begin{itemize}
\item $g_{\{t_{1},t_{2}\}}(x)= \frac{x-2x^{2}+2x^{4}+x^{5}}{1-3x+x^{2}+2x^{3}}$ 
\item Sequence: $27, 59, 126, 263, 551, 1136, 2327, 4743, 9630, 19493, \dots$
\item OEIS A116712: Number of permutations of length k which avoid the patterns 231, 3214, 4312  for $n \geq 2.$ 
\end{itemize}
\begin{center}
$\left\{\st{5}{2},\st{5}{6} \right\}, \left\{\st{5}{2},\st{5}{10} \right\}$\\
\end{center}

\hrule

\begin{center}
\textit{Class L}
\end{center}
\begin{itemize}
\item $g_{\{t_{1},t_{2}\}}(x)= \frac{x-3x^{2}+2x^{3}+x^{4}}{1-4x+4x^{2}}$ 
\item Sequence: $28, 64, 144, 320, 704, 1536, 3328, 7168, 15360, 32768,  \dots$
\item OEIS A045623: Number of 1's in all compositions of $k+1$ for $n \geq 2.$ 
\end{itemize}
\begin{center}
$\left\{\st{5}{1},\st{5}{6} \right\}, \left\{\st{5}{1},\st{5}{10} \right\}, \left\{\st{5}{2},\st{5}{7} \right\}, \left\{\st{5}{2},\st{5}{11} \right\}$\\
$\left\{\st{5}{3},\st{5}{6} \right\}, \left\{\st{5}{3},\st{5}{7} \right\}, \left\{\st{5}{3},\st{5}{12} \right\}, \left\{\st{5}{4},\st{5}{10} \right\}$\\
$\left\{\st{5}{4},\st{5}{11} \right\}, \left\{\st{5}{4},\st{5}{13} \right\}$\\
\end{center}

\hrule

\begin{center} 
\textit{Class M}
\end{center}
\begin{itemize}
\item $g_{\{t_{1},t_{2}\}}(x)= \frac{x-2x^{2}+x^{3}}{1-3x+2x^{2}-x^{3}}$ 
\item Sequence: $28, 65, 151, 351, 816, 1897, 4410, 10252, 23833, 55405,  \dots$
\item OEIS A034943: Binomial transform of Padovan sequence A000931 for $n\geq 1.$
\end{itemize}
\begin{center}
$\left\{\st{5}{1},\st{5}{3} \right\}, \left\{\st{5}{1},\st{5}{4} \right\}, \left\{\st{5}{2},\st{5}{3} \right\}, \left\{\st{5}{2},\st{5}{4} \right\}$\\
$\left\{\st{5}{2},\st{5}{5} \right\}, \left\{\st{5}{3},\st{5}{4} \right\}, \left\{\st{5}{3},\st{5}{5} \right\}$\\
\end{center}

\hrule

\begin{center} 
\textit{Class N}
\end{center}
\begin{itemize}
\item $g_{\{t_{1},t_{2}\}}(x)= \frac{x-x^{2}-x^{3}}{1-2x-x^{2}}$ 
\item Sequence: $29, 70, 169, 408, 985, 2378, 5741, 13860, 33461, 80782,  \dots$
\item OEIS A000129: Pell numbers: $a(0) = 0, a(1) = 1;$ for $k \geq 1$, $a(k) = 2 \cdot a(k-1) + a(k-2)$ for $n \geq 2.$ 
\end{itemize}
\begin{center}
$\left\{\st{5}{1},\st{5}{2} \right\}, \left\{\st{5}{4},\st{5}{5} \right\}, \left\{\st{5}{6},\st{5}{9} \right\}$\\
\end{center}

\hrule

\end{document}